\newtheorem{theorem}{Theorem}[section]
\newtheorem{lemma}[theorem]{Lemma}
\newtheorem{corollary}[theorem]{Corollary}
\newtheorem*{theorem*}{Theorem}
\theoremstyle{remark}
\newtheorem{question}{Question}
\numberwithin{equation}{section}
\newcommand{\C}{\mathbb{C}}
\renewcommand{\C}{\mathcal{C}}
\begin{document}
\title[Optimal intersection]{Optimal intersection growth along invariant quasi-axes in the curve complex}

\title{Small intersection numbers in the curve graph}

\author{Tarik Aougab and Samuel J. Taylor}

\address{Department of Mathematics \\ Yale University \\ 10 Hillhouse Avenue, New Haven, CT 06510 \\ USA}
\email{tarik.aougab@yale.edu}

\address{Department of Mathematics \\ University of Texas at Austin \\ 1 University Station C1200, Austin, TX 78712 \\ USA }
\email{staylor@math.utexas.edu}

\date{\today}

\subjclass[2000]{46L55}

\keywords{curve complex, mapping class group}

\begin{abstract}

Let $S_{g,p}$ denote the genus $g$ orientable surface with $p \ge 0$ punctures, and let $\omega(g,p)= 3g+p-4$. We prove the existence of infinitely long geodesic rays $\left\{v_{0},v_{1}, v_{2}, ...\right\}$ in the curve graph satisfying the following optimal intersection property:  for any natural number $k$, the endpoints $v_{i},v_{i+k}$ of any length $k$ subsegment intersect $O(\omega^{k-2})$ times. By combining this with work of the first author, we answer a question of Dan Margalit.
\end{abstract}

\maketitle

\section{Introduction}
Let $S_{g,p}$ denote the orientable surface of genus $g \geq 0$ with $p\geq 0$ punctures. The \textit{curve graph} for $S_{g,p}$, denoted $\mathcal{C}_{1}(S_{g,p})$, is the graph whose vertices correspond to homotopy classes of essential, non-peripheral simple closed curves on $S_{g,p}$, and whose edges join vertices that represent curves whose union is a $2$-component multi-curve. Denote distance in this graph by $d_{\mathcal{C}(S_{g,p})}$ (or simply $d$ when the surface is clear from context). The subscript $1$ denotes the fact that $\mathcal{C}_{1}$ is naturally the $1$-skeleton of a $3g+p-4$-dimensional flag simplicial complex, in which the $k$-simplices correspond to $(k+1)$-component multi-curves. We denote by $\mathcal{C}_{0}$ the vertices of the graph $\mathcal{C}_{1}$.

By an argument going back to Lickorish \cite{Lickorish} and stated explicitly by Hempel \cite{Hempel}, the geometric intersection number strongly controls the distance $d_{\mathcal{C}}$; concretely, given a pair of curves $\alpha,\beta$ on $S_{g,p}$, 
\[ d_{\mathcal{C}}(\alpha,\beta)\leq 2\log_{2}(i(\alpha,\beta))+2.\]

A complexity-dependent version of this bound was obtained by the first author \cite{Aougab1}; in what follows, let $\omega(S_{g,p})=\omega(g,p)=3g+p-4$. Then for any $\lambda \in (0,1)$, there exists $N=N(\lambda)$ such that for all $S$ with $\omega(S)>N$, if $\alpha,\beta \in \mathcal{C}_{0}(S)$, 

\begin{eqnarray}\label{Tarik}
d_{\mathcal{C}(S)}(\alpha,\beta)\geq k \Rightarrow i(\alpha,\beta)> \omega^{\lambda(k-2)}.
\end{eqnarray}

The purpose of this note is to establish a corresponding upper bound on the minimal number of times a pair of distance $k$ simple closed curves intersect. We show:

\begin{theorem} \label{main} For any $g,p$ with $\omega(g,p)>0$, there exists an infinite geodesic ray $\gamma= \left\{v_{0},v_{1},v_{2},...\right\}$ such that for any $i\leq j$, 
\[ i(v_{i},v_{j})\le \epsilon (\epsilon B)^{2j-5}\omega^{|j-i|-2}+O\left(\omega^{|j-i|-4}\right),\]
where $B$ is a universal constant and $\epsilon =1$ if $g \ge 1$ and $\epsilon = 4$ otherwise.
\end{theorem}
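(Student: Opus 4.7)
My plan is to realize the geodesic ray as (or along) an invariant quasi-axis of a pseudo-Anosov mapping class $\phi$ tailored to the surface $S_{g,p}$. The pseudo-Anosov is chosen so that its action on $\mathcal{C}_1(S_{g,p})$ has translation length $1$, and so that intersection numbers along its axis grow at the optimal rate of roughly $\omega(g,p)$ per unit distance. A natural way to produce such a $\phi$ is Thurston's construction: pick filling simple closed multicurves $a, b$ on $S_{g,p}$ with $i(a,b)$ of order $\sqrt{\omega}$ and set $\phi = T_a T_b^{-1}$, whose stretch factor $\lambda_\phi$ is the leading eigenvalue of the associated $2\times 2$ integer matrix and is therefore of order $\omega$. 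For $g=0$, the filling-pair configuration is more constrained and the numerical overhead accounts for the factor $\epsilon = 4$ in the final bound.

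Given such a $\phi$, I would fix a vertex $v_0$ carried by the stable train track and define $v_i = \phi^i v_0$. The translation-length condition yields $d_{\mathcal{C}}(v_i, v_j) \le |j-i|$; combined with the lower bound \eqref{Tarik}, this rules out shortcuts and forces the orbit to be a bona fide geodesic ray in $\mathcal{C}_1(S_{g,p})$, after (if necessary) interpolating vertices along the invariant quasi-axis. The intersection number $i(v_0, \phi^n v_0)$ is then read off the train-track transition matrix of $\phi$: the leading contribution has the form $C \cdot \lambda_\phi^{n-2} \sim C \cdot \omega^{n-2}$, where $C$ encodes the initial intersection $i(v_0,v_1)$ and the train-track combinatorics, while non-dominant eigenvalues of the transition matrix contribute the $O(\omega^{|j-i|-4})$ error. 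The accumulated multiplicative constants from successive applications of the transition matrix produce the prefactor $(\epsilon B)^{2j-5}$, with $B$ capturing a universal upper bound on intersections between curves carried by consecutive train-track splits.

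The main obstacle I anticipate is coordinating the two defining properties of $\phi$ simultaneously: translation length exactly $1$ on $\mathcal{C}(S)$ and dilatation of order $\omega$. These are a priori unrelated invariants --- one is combinatorial, the other analytic --- and matching them requires either a delicately designed Thurston pair or passage to an invariant quasi-axis with controlled fellow-travelling constants, together with careful attention to the choice of $v_0$ so that its orbit lies close to that axis. A secondary difficulty is verifying that $B$ can be taken independent of $(g,p)$: this requires uniformity in the train-track combinatorics across the entire family of surfaces $S_{g,p}$, and is typically the technical core of arguments of this kind. Once these are established, combining the upper bound on $i(v_0, \phi^n v_0)$ with $\phi$-invariance of the ray and then translating back to pairs $(v_i, v_j)$ along the orbit produces the stated inequality.
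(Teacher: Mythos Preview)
Your proposal takes a route entirely different from the paper's, and the central step---producing the required pseudo-Anosov $\phi$---is a genuine gap rather than a routine verification.

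The paper uses no pseudo-Anosovs, train tracks, or dilatations. It starts from an explicit length-$3$ geodesic $[v_0,v_1,v_2,v_3]$ with $i(v_0,v_3)$ linear in $\omega$ and $i(v_0,v_2)$, $i(v_1,v_3)$ bounded independently of $\omega$ (built by hand in Section~\ref{MIFC}), and then extends recursively: if $\gamma_k$ has terminal vertex $e_k$, set $\gamma_{k+1}=\gamma_k'\cup T_{e_k}^{B}(\gamma_k')$ where $\gamma_k'$ is $\gamma_k$ with $e_k$ deleted. The Bounded Geodesic Image Theorem (with Webb's uniform constant $M$, and $B=M+3$) forces any geodesic between the endpoints of $\gamma_{k+1}$ to pass through $N_1(e_k)$, certifying that $\gamma_{k+1}$ is geodesic; the elementary twist inequality $|i(\gamma,T_\alpha^N\beta)-N\,i(\alpha,\gamma)i(\alpha,\beta)|\le i(\beta,\gamma)$ then controls all pairwise intersections by induction on $k$. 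Thus $B$ is the BGIT constant plus $3$, not a train-track quantity, and the $j$-dependent prefactor $(\epsilon B)^{2j-5}$ is an artifact of the level-doubling recursion.

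Your plan hinges on exhibiting, uniformly in $(g,p)$, a pseudo-Anosov with curve-complex translation length exactly $1$ and dilatation comparable to $\omega$, together with a basepoint $v_0$ whose $\phi$-orbit is an honest geodesic. You correctly flag this as the main obstacle, and it is a real one: the Thurston map $T_aT_b^{-1}$ gives no direct control on translation length in $\mathcal{C}(S)$, and no general construction achieving both properties simultaneously is available. Your appeal to inequality~(\ref{Tarik}) to ``rule out shortcuts'' is also misdirected: that inequality lower-bounds intersection number in terms of distance, so its contrapositive upper-bounds distance in terms of small intersection---the wrong direction for showing the orbit is geodesic. Finally, observe that along any $\phi$-orbit $i(v_i,v_j)$ depends only on $|j-i|$, so a successful version of your argument would yield a prefactor independent of $j$; this is strictly stronger than the stated theorem and is further evidence that this route is harder, not easier, than the paper's.
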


For convenience, denote 
$$i_{k,(g,p)} = \min \{ i(\alpha,\beta) : \alpha,\beta \in \C(S_{g,p}), \,  d_S(\alpha,\beta) = k   \}. $$
Then Theorem \ref{main} implies $i_{k,(g,p)}$ is bounded above by a polynomial function of $\omega$ with degree $k-2$. \\

We remark that Theorem \ref{main} was proven in response to the following question, formulated by Dan Margalit:

\begin{question}[Margalit]\label{Dan}
Is it the case that for $S_g$, $i_{k,g} = O(g^{k-2})$?
\end{question}

 Combining Theorem \ref{main} with the lower bound coming from inequality \ref{Tarik} gives a positive answer to Question \ref{Dan}, asymptotically in genus.\\

\noindent \textbf{Acknowledgements.} The authors would like to thank Dan Margalit for proposing the question, and for many helpful conversations. This work was initiated during the AMS Mathematics Research Communities program on geometric group theory, June 2013.

\section{Preliminaries}

We briefly recall the definition of the curve complex for an annulus, and we review the properties of the subsurface projection to this complex. See \cite{MM2} for the general definition of subsurface projections and additional details.

For a closed annulus $Y \subset S$ whose core curve $\alpha$ is essential, let $\tilde{Y}$ be the cover of $S$ corresponding to $Y$. Denote by $\overline{Y}$ the compactification of $\tilde{Y}$ obtained in the usual way, for example by choosing a hyperbolic metric on $S$. The curve complex $\C(Y)$ is the graph whose vertices are homotopy classes of properly embedded, simple arcs of $\overline{Y}$ with endpoints on distinct boundary components.  Edges of $\C(Y)$ correspond to pairs of vertices that have representatives with disjoint interiors. The projection $\pi_Y$ from the curve complex of $S$ to the curve complex of $Y$ is defined as follows: for any $\beta \in \C(S)$ first realize $\alpha$ and $\beta$ with minimal intersection. If $\beta$ is disjoint from $\alpha$ then $\pi_Y(\beta) = \emptyset$. Otherwise, the complete preimage of $\beta$ in $\tilde{Y}$ contains arcs with well-defined endpoint on distinct components of $\partial \overline{Y}$. Define $\pi_Y (\beta) \subset \C(Y)$ to be this collection of arcs in $\overline{Y}$.

If $\alpha$ is a curve in $S$, we also denote by $\mathcal{C}(\alpha)$ the curve complex for the annulus $Y$ with core curve $\alpha$. Let $\pi_{\alpha}: \C(S) \setminus N_1(\alpha) \to \C_{\alpha}$ be the associate subsurface projection. From \cite{MM2}, we note that when $\omega(S) > 1$ the diameter of $\pi_{\alpha}(\beta)$ is $\le1$ for any curve $\beta$ that meets $\alpha$. Further, $\pi_{\alpha}$ is coarsely $1$-Lipschitz along paths in $\C(S) \setminus N_1(\alpha)$, i.e if $\gamma_0, \gamma_1, \ldots, \gamma_n$ is a path in $\C_0$ with $\pi_{\alpha}(\gamma_i) \neq \emptyset$ for each $i$, then $d_{\alpha}(\gamma_1,\gamma_n) \le n+1$.  Also recall that if $T_{\alpha}$ denotes the Dehn twist about $\alpha$ then 
$$d_{\alpha}(\gamma, T_{\alpha}^N(\gamma)) \ge N -2 .$$
Here, $d_{\alpha}(\beta,\gamma)$ is short-hand for $d_{\alpha}(\pi_{\alpha}(\beta), \pi_{\alpha}(\gamma))$.

As a consequence of the Lipschitz condition of the projection, note that if $\beta, \gamma \in \C(S)$ both meet $\alpha$ and 
$$d_{\alpha}(\beta, \gamma) \ge d_S (\beta, \gamma) +2 ,$$
then any geodesic in $\C(S)$ from $\beta$ to $\gamma$ contain a vertex disjoint from $\alpha$, i.e. any geodesic from $\beta$ to $\gamma$ must pass through $N_1(\alpha)$. In fact, a much stronger result, known as the bounded geodesic image theorem, is true.  This was first proven by Masur and Minsky in \cite{MM2}, but the version we state here is due to Webb and gives a uniform, computable constant \cite{webb2013short}. It is stated below for general subsurfaces, although we will use it only for annuli.

\begin{theorem}[Bounded geodesic image theorem] \label{BGIT}
There is a $M \ge 0$ so that for any surface $S$ and any geodesic $g$ in $\C(S)$, if each vertex of $g$ meets the subsurface $Y$ then $\mathrm{diam}(\pi_Y(g)) \le M$.
\end{theorem}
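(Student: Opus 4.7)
The plan is to prove the Bounded Geodesic Image Theorem by contradiction, leveraging the uniform hyperbolicity of the curve graph together with the unicorn arc technology of Hensel--Przytycki--Webb. I would fix a geodesic $g = v_0, v_1, \ldots, v_n$ in $\C(S)$ with each $v_i$ meeting the subsurface $Y$, and suppose for contradiction that $\mathrm{diam}(\pi_Y(g))$ exceeds a sufficiently large universal constant $M$. The goal is then to produce a vertex of $g$ disjoint from $Y$, violating the hypothesis.

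The technical engine is the unicorn arc construction. Given minimally intersecting representatives of $v_0$ and $v_n$, one builds a discrete family $\mathcal{U} = \{u_0, \ldots, u_m\}$ of unicorn arcs by splicing initial and terminal subarcs of the two curves at their successive intersection points. By Hensel--Przytycki--Webb, consecutive unicorns are disjoint, so $\mathcal{U}$ is a path in $\C(S)$; moreover $\mathcal{U}$ is a $1$-slim unparameterized quasi-geodesic, and hence lies within uniformly bounded Hausdorff distance of any honest geodesic between its endpoints, in particular of $g$.

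I would then analyze how $\pi_Y$ varies along $\mathcal{U}$. When two consecutive unicorns $u_j, u_{j+1}$ both meet $Y$, their subsurface projections differ by a bounded amount, so if \emph{every} unicorn meets $Y$ the total diameter $d_Y(v_0,v_n)$ is bounded linearly in $m$; combined with the fellow-traveling bound this yields the desired universal bound $M$ outright. In the alternative case there is some $u_j$ disjoint from $Y$, and fellow-traveling places $u_j$ within universally bounded distance of some $v_i$. A short argument combining this with the fact that $g$ is an honest geodesic (not merely a quasi-geodesic) and with the coarse Lipschitz property of $\pi_Y$ along paths intersecting $Y$ then upgrades the nearby curve $u_j$ to a vertex of $g$ itself disjoint from $Y$, producing the contradiction.

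The main obstacle is securing a single constant $M$ that is genuinely independent of the topology of both $S$ and $Y$. The classical Masur--Minsky proof via hierarchies produces a constant depending on the complexity of $S$, and the difficulty in the uniform version is to avoid any such dependence. This requires both the uniform hyperbolicity of all curve graphs (available from the work of Aougab, Bowditch, Clay--Rafi--Schleimer, and Hensel--Przytycki--Webb) and a careful bookkeeping of intersection numbers along $\mathcal{U}$, so that the per-step projection change and the Hausdorff fellow-traveling constant are each controlled without reference to the complexity of $S$. Extracting an explicitly computable $M$, as in Webb's formulation, is essentially the problem of making each of these ingredients quantitative at the same time.
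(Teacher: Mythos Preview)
The paper does not supply a proof of this theorem: it is quoted as background, attributed to Masur--Minsky with the uniform, computable constant due to Webb. There is thus no argument in the paper to compare yours against; you are in effect sketching Webb's proof, which is the reference the paper cites.

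Your outline is in the right spirit, but each branch of the dichotomy has a genuine gap. In your Case~1 (every unicorn meets $Y$) you obtain $d_Y(v_0,v_n)$ bounded linearly in $m$, the number of unicorn arcs; but $m$ is essentially $i(v_0,v_n)$ and is not universally bounded, and the Hausdorff fellow-traveling constant between the unicorn path and $g$ does not control $m$. So this case does not produce a uniform $M$ as written. In Case~2 you locate a unicorn $u_j$ disjoint from $Y$ within uniformly bounded distance of some $v_i \in g$ and then assert that a ``short argument'' upgrades this to a vertex of $g$ itself disjoint from $Y$. But every $v_i$ meets $Y$ by hypothesis, and proximity in $\C(S)$ to a curve missing $Y$ does not force $v_i$ to miss $Y$; this step is precisely where the difficulty lies, and it is not short. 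Webb's actual argument does not route through such a dichotomy: one tracks the projection to $Y$ along the unicorn sequence using the specific surgery combinatorics of unicorns relative to $\partial Y$, bounding the total variation of $\pi_Y$ directly by a constant independent of $m$ and of the topology of $S$.
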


We end this section with the following well known fact, see \cite{Iv2}. Let $\mathcal{C}_{0}(S)$ denote the vertex set of $\mathcal{C}_{1}(S)$. Then if $\alpha, \beta, \gamma \in \C_0(S)$ then 
$$| i(\gamma, T_{\alpha}^N(\beta)) - N \cdot i(\alpha, \gamma) i(\alpha,\beta)| \le i(\beta,\gamma). $$
We refer to this as the twist inequality.

In the next section, we will briefly make use of the \textit{arc and curve graph} $\mathcal{AC}_{1}(S)$, a $1$-complex associated to a surface with boundary or punctures where the vertices are properly embedded arcs (modulo homotopy rel boundary) together with $\mathcal{C}_{0}(S)$, and edges correspond to pairs of vertices that can be realized disjointly on the surface; let $\mathcal{AC}_{0}(S)$ denote the vertices of $\mathcal{AC}_{1}(S)$.

A non-annular subsurface $\Sigma \subset S$ is called \textit{essential} if all of its boundary components are essential curves in $S$, and a properly embedded arc is \textit{essential} if it can not be homotoped into the boundary or a neighborhood of a puncture. Then there is a projection map $\pi^{\mathcal{AC}}:\mathcal{C}_{0}(S)\rightarrow \mathcal{P}(\mathcal{AC}_{0}(\Sigma))$, where $\mathcal{P}(\cdot)$ denotes the power set, defined as follows: a vertex $v \in \mathcal{C}_{0}$ is sent to the components of its intersection with $\Sigma$ which are essential in $\Sigma$.

\section{Minimal intersecting filling curves}\label{MIFC}
The proof of Theorem \ref{main} proceeds by beginning with curves $\alpha_3,\beta_3$ in $S_{g,n}$ that fill, i.e. $d_S(\alpha,\beta)\ge 3$, and have intersection number bounded linearly by $\omega(g,n)$. In many cases, we find $\alpha_3$ and $\beta_3$ whose intersection number is the minimal possible.

\begin{lemma} \label{minimal filling}
Given $S_{g,p}$, the following holds:
\begin{enumerate}
\item $p \le1$ and $g \neq 2, 0$, 
$$i_{3,(g,p)} = 2g -1.$$ 
\item If $p\le 2$ and $g = 2$,
$$i_{3,(g,p)}=4.$$
\item If $p \ge 2$ and $g \neq 2,0$,
$$i_{3,(g,p)} = 2g +p -2.$$
\item If $g=2$ and $p \ge 2$ even, 
$$ i_{3,(g,p)} = 2g+p-2,$$ 
and for $p \ge 3$ odd, 
$$2g+p-2\le  i_{3,(g,p)}\le 2g+p-1.$$ 
\item If $g=0$ and $p \ge 4$ even, 
$$p-2 \le i_{3,(g,p)} \le p,$$
and for $p$ odd, 
$$ i_{3,(g,p)} = p-1.$$
\end{enumerate}
\end{lemma}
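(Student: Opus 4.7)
The plan is to establish the upper bounds in the statement by explicit constructions of filling pairs, and the lower bounds by an Euler characteristic count applied to the graph $\alpha\cup\beta$. The guiding fact is that $d_S(\alpha,\beta)\ge 3$ precisely when $\alpha,\beta$ fill $S_{g,p}$, so every component of $S_{g,p}\setminus(\alpha\cup\beta)$ is a disk or a once-punctured disk and, in particular, each puncture occupies its own complementary region; a region containing two or more punctures would carry an essential non-peripheral simple closed curve, contradicting filling.

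For the main lower bound, realize $\alpha,\beta$ in minimal position with $n=i(\alpha,\beta)$ and view $\alpha\cup\beta$ as a $4$-valent graph on the closed surface $S_g$. Then $V=n$, $E=2n$, and Euler's formula gives $F=n+2-2g$ complementary regions. Since the $p$ punctures sit in distinct regions and $\alpha\cup\beta$ cannot cover $S_g$, one has $F\ge\max(p,1)$, yielding
\[ n\ge 2g+p-2 \]
in general and the stronger $n\ge 2g-1$ when $p\le 1$. For the sphere ($g=0$) an additional input is available: any simple closed curve on $S_{0,p}$ is null-homologous in the closed sphere $S_0$, so $i(\alpha,\beta)$ is always even, improving $n\ge p-2$ to $n\ge p-1$ when $p$ is odd. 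These observations account for all the claimed lower bounds except the refined one in case (2).

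For the upper bounds I would produce explicit filling pairs. When $g\ge 3$, a chain-type construction gives a filling pair on $S_g$ with $n=2g-1$ intersections and a single disk complementary region; dropping a puncture inside that disk handles $p=1$ at no extra cost. Each further puncture either sits in an existing disk region for free or is accommodated by a single extra intersection subdividing a region, matching $n=2g+p-2$ for $p\ge 2$. The $g=2$ upper bound $n=4$ is realized by a standard filling pair on $S_2$ with two octagonal complementary regions, which can be punctured to cover $p\le 2$; on the sphere, a direct planar drawing of alternating arcs realizes $n=p-1$ or $n=p$. In each case the distance is verified to be exactly $3$ by exhibiting an explicit length-$3$ path $\alpha,\gamma_1,\gamma_2,\beta$ in $\C(S)$, with the filling condition supplying the opposite inequality $d\ge 3$.

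The main obstacle is the refined lower bound in case (2): the Euler estimate gives only $n\ge p+2$ for $g=2$, so to reach $n\ge 4$ when $p\le 1$ one must rule out the combinatorial possibility $n=3$. In that case $F=1$ and the complement is a single (possibly once-punctured) $12$-gon whose boundary alternates between the three arcs of $\alpha$ and three arcs of $\beta$, each appearing on two sides. I would carry this out by a finite case analysis of the edge-identifications of this $12$-gon, showing that no pairing simultaneously recovers $S_2$ as an oriented surface and realizes both $\alpha$ and $\beta$ as embedded simple closed curves. I expect this combinatorial step to be the most delicate part of the proof; the two-sided bounds recorded in cases (4) with $p$ odd and (5) with $p$ even are exactly the settings in which the analogous refinement is not available.
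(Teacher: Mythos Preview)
Your overall architecture---Euler characteristic for the lower bounds and explicit filling pairs for the upper bounds---is exactly what the paper does, and your identification of the $g=2$, $n=3$ exclusion as the delicate point is correct (the paper simply cites \cite{AH1} for this, while you propose to do the $12$-gon case analysis directly; that is a reasonable alternative).

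There is, however, a genuine gap in your mechanism for adding punctures. You write that ``each further puncture \ldots\ is accommodated by a single extra intersection subdividing a region.'' Combinatorially it is true that one new vertex in the $4$-valent graph creates one new face, but you cannot realize a change of $+1$ in the geometric intersection number of two \emph{closed} curves by an isotopy: a finger move of $\alpha$ across an arc of $\beta$ creates two new crossings and one bigon, and puncturing that bigon yields $(n,p)\mapsto(n+2,p+1)$, not $(n+1,p+1)$. Starting from your $(2g-1)$-pair with a single disk complement and iterating this gives $n=2g+2p-3$ for $p\ge 1$, far from the target $2g+p-2$.

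The paper's fix is a ``double bigon'' move: push an arc of $\alpha$ across an arc of $\beta$ and back, producing \emph{two} bigons and two new intersections; puncturing both bigons gives $(n,p)\mapsto(n+2,p+2)$, which matches the slope of $2g+p-2$. This handles odd $p$ starting from the $(2g-1)$-pair on $S_{g,1}$. For even $p$ one needs a separate base case: a filling pair on $S_{g,0}$ with exactly $2g$ intersections and \emph{two} disk complementary regions (puncture both to reach $S_{g,2}$ with $n=2g$, then iterate). The paper constructs such $2g$-pairs explicitly by a cut-and-glue induction on genus; your outline omits this ingredient entirely, and without it the even-$p$ upper bound in (3) and (4) does not follow. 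Finally, your ``chain-type construction'' for the $(2g-1)$-pair is vague; the paper again invokes \cite{AH1} here, and the existence of such pairs is not obvious.
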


\begin{proof}
The lower bounds in $(1)-(5)$ follow from an Euler characteristic argument using the observation that when $\alpha$ and $\beta$ fill, $\alpha \cup \beta$ is a $4$-valent graph whose complementary regions are either disks or punctured disks. For $(1)$, \cite{AH1} show the existence of filling pairs intersecting $2g-1$ times, which agrees with the lower bound from the Euler characteristic argument; when $g=2$, there exists a filling pair intersecting $4$ times \cite{FM}, which is best possible \cite{AH1}. 

$(3)$ and $(4)$ are obtained by the following procedure that produces a filling pair for $S_{g,p+2}$ from a filling pair for $S_{g,p}$ at the expense of two additional intersection points. Let $\alpha$ and $\beta$ be a filling pair for $S_{g,p}$; orient $\alpha$ and $\beta$, and label the arcs of $\alpha$ (resp. $\beta$) separated by intesection points from $\alpha_{1},...,\alpha_{i(\alpha,\beta)}$ (resp. $\beta_{1},...,\beta_{i(\alpha,\beta)}$) with respect to the chosen orientation, and a choice of initial arc. Suppose that the initial point of $\alpha_{k}$ coincides with the terminal point of $\beta{j}$, as seen on the left hand side of Figure $1$ below. 

\begin{figure}[H]
\centering
	\includegraphics[width=3.5in]{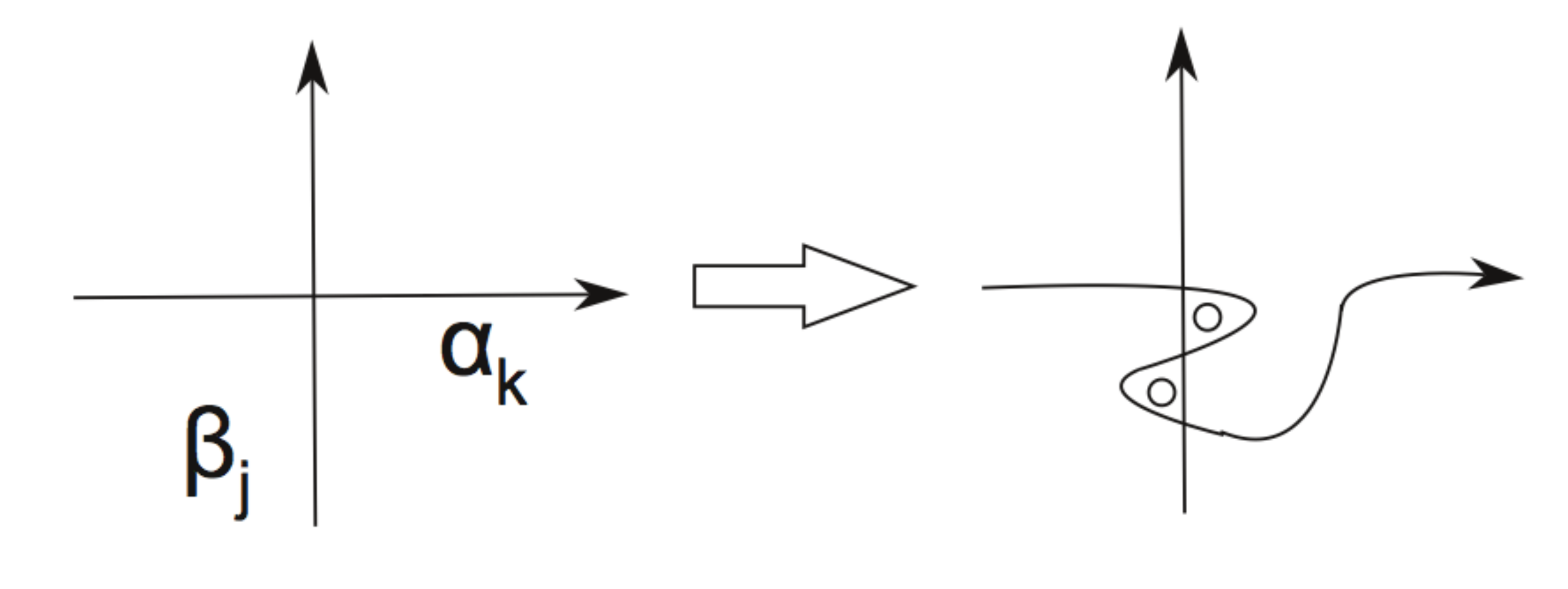}
\caption{Pushing $\alpha_{k}$ across $\beta_{j}$ and back over creates $2$ bigons; puncturing each produces a filling pair on $S_{g,p+2}$.}
\end{figure} 

Then pushing $\alpha_{k}$ across $\beta_{j}$ and back produces a pair of bigons; puncturing each of these bigons produces a filling pair intersecting $i(\alpha,\beta)+2$ times on $S_{g,p+2}$. 
Thus if $p=2k+1$ is odd and $g \neq 2$, by $(1)$ there exists a filling pair whose complement is connected, and we can puncture this single complementary region to obtain a filling pair on $S_{g,1}$. Then performing the operation pictured above k times yields a filling pair on $S_{g,p}$ intersecting $2g+p-2$ times. The Euler characteristic argument above yields a lower bound of $2g+p-2$ for $i_{3,(g,p)}$, and this proves $(3)$ in the case $p$ is odd. 

If $p$ is even, the same argument can work if there exists a filling pair $(\alpha_{g},\beta_{g})$ on $S_{g,0}$ intersecting $2g$ times, which is equivalent to the complement of $\alpha\cup \beta$ consisting of two topological disks. Assuming such a filling pair exists, we obtain a filling pair on $S_{g,2}$ intersecting $2g=2g+p-2$ times by puncturing both disks. Then the double bigon procedure described above produces the desired filling pair for any larger number of even punctures. 

Therefore, to finish the proof of $(3)$ it suffices to exhibit a filling pair on $S_{g,0}, g\neq 2$, intersecting $2g$ times. If $g=1$, take $\alpha_{1}$ to be the $(1,0)$ curve and $\beta_{1}$ the $(1,2)$ curve. 

Consider the following polygonal decomposition of $S_{2,0}$ (figure as seen in \cite{AH1}):

\begin{figure}[H]
\centering
	\includegraphics[width=3.3in]{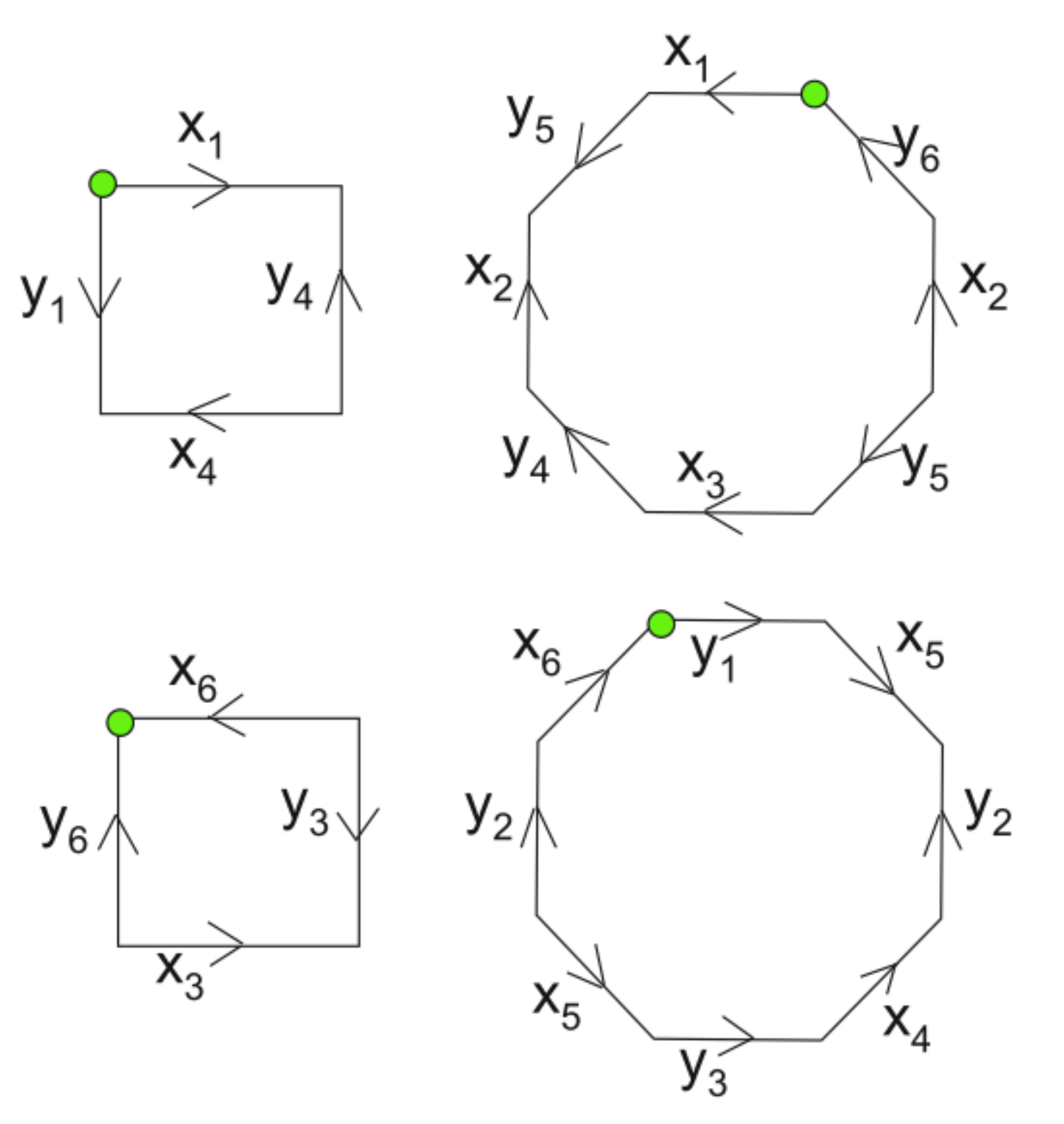}
\caption{Gluing the polygons together with respect to the oriented edge labeling yields $S_{2,0}$, and the $x$-arcs concatenate in the quotient to form a simple closed curve $x$ which fills $S_{2,0}$ with the curve $y$, the concatenation of the $y$-arcs. The $4$ green points are all identified together in $S_{2,0}$.}
\end{figure} 

The boundary of these polygons project to a filling pair $(x,y)$ on $S_{2,0}$ intersecting $6$ times. Take $S_{1,0}$, equipped with the filling pair described above intersecting twice, and cut out a small disk centered around either of these two intersection points to obtain $\tilde{S}_{1}$, a torus with one boundary component equipped with arcs $\tilde{\alpha}_{1},\tilde{\beta}_{1}$. 

Then given $S_{2,0}$ equipped with $(x,y)$, cut out a small disk centered around the green intersection point above in Figure $2$ to obtain $\tilde{S}_{2}$, a genus two surface with one boundary component equipped with arcs $\tilde{x},\tilde{y}$. Then glue $\tilde{S}_{1}$ to $\tilde{S}_{2}$ by identifying boundary components, while concatenating the endpoints of $\tilde{\alpha}_{1}$ to $\tilde{x}$, and the endpoints of $\tilde{\beta}_{1}$ to $\tilde{y}$. 

This yields a pair of simple closed curves $(\alpha_{3},\beta_{3})$ on $S_{3,0}$ intersecting $2g$ times, and we claim that this is a filling pair. Indeed, let $\gamma$ be any simple closed curve on $S_{3,0}$ and assume $\gamma$ is disjoint from both $\alpha_{3}$ and $\beta_{3}$. Consider the projections $\pi^{\mathcal{AC}}_{\tilde{S}_{1}}(\gamma), \pi^{\mathcal{AC}}_{\tilde{S}_{2}}(\gamma)$ of $\gamma$ to the arc and curve complex $\mathcal{AC}$ of the subsurfaces $\tilde{S}_{1},\tilde{S}_{2}$. By assumption the arc $\pi^{\mathcal{AC}}_{\tilde{S}_{2}}(\gamma)$ is disjoint from the arcs $\tilde{x},\tilde{y}$.

 It then follows that this arc must be homotopic into $\partial \tilde{S}_{2}$, because the arcs $\tilde{x},\tilde{y}$ are distance at least $3$ in $\mathcal{AC}(\tilde{S}_{2})$. Hence $\gamma$ is homotopic into $\tilde{S}_{1}$; however, this contradicts the fact that $\tilde{\alpha},\tilde{\beta}$ fill $\tilde{S}_{1}$, and therefore $\gamma$ can not be disjoint from both $\alpha_{3}$ and $\beta_{3}$. 

Then to obtain a filling pair $(\alpha_{2k+1},\beta_{2k+1})$ intersecting $2(2k+1)$ times on any odd genus surface, we simply iterate this procedure by choosing a filling pair intersecting $2(2(k-1)+1)$ times on $S_{2(k-1)+1,0}$, cutting out a disk centered at any intersection point, and gluing on a copy of $\tilde{S}_{2}$. Thus, the existence of the desired pair for any even genus follows from the same argument by the existence of such a pair on $S_{2,0}$- see Figure $5$ below. This completes the proof of $(3)$.

 Then $(4)$ follows from $(2)$, and another application of the double bigon construction. 

Finally, the two upper bounds in $(5)$ are implied by the following construction on $S_{0,p}$, and the fact that any two simple closed curves must intersect an even number of times. 
\begin{figure}[H]
\centering
	\includegraphics[width=3.5in]{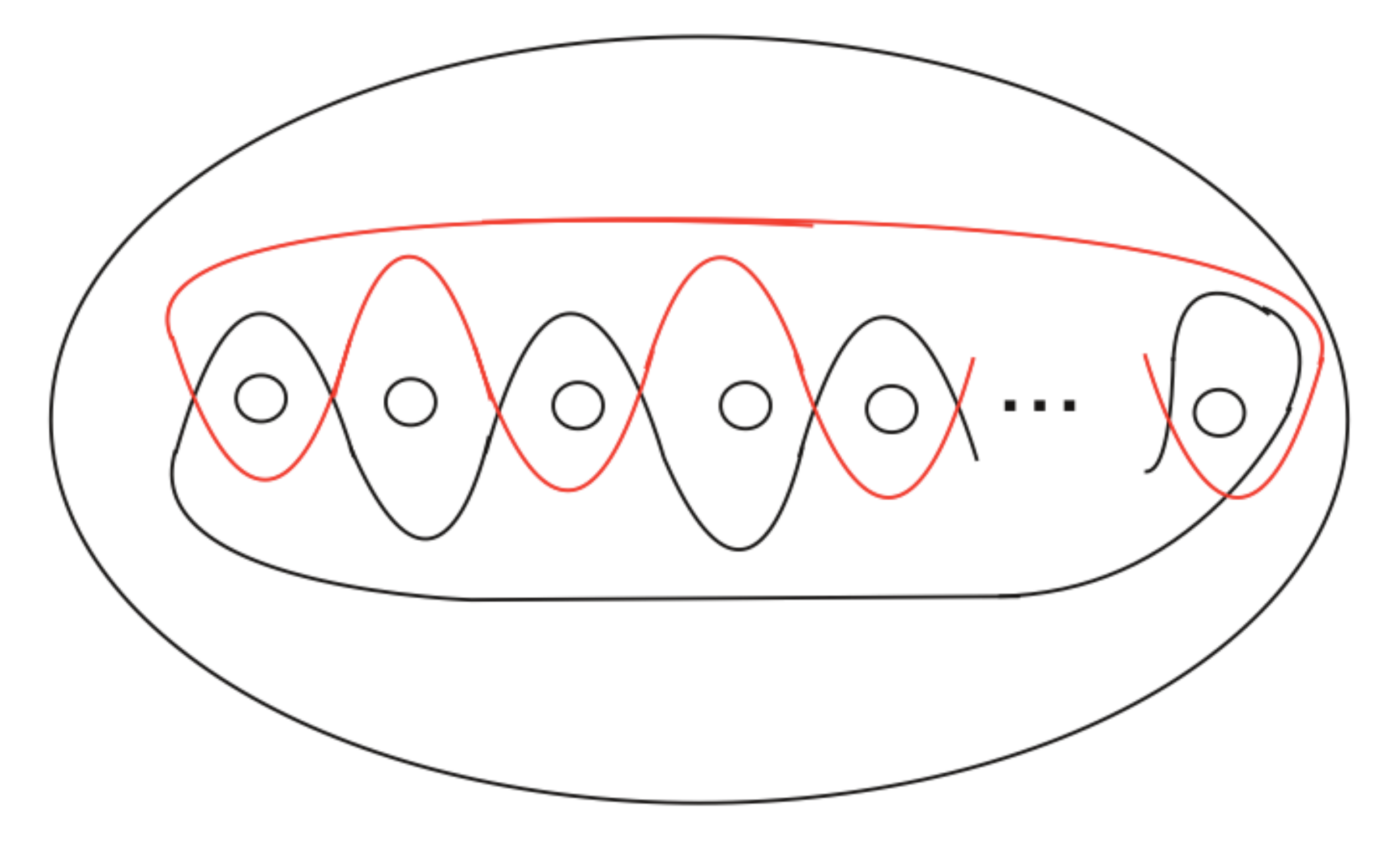}
\caption{The red and black simple closed curves fill, and, in the case $p$ is even, intersect $p$ times.}
\end{figure}

\end{proof}

To prove the main result, we will first exhibit the existence of a length $3$ geodesic segment, satisfying the property that any subsegment has endpoints intersecting close to minimally for their respective curve graph distances. The main theorem is then proved by carefully extending such a segment and inducting on curve graph distance. Thus, we conclude this section with the following lemma:

\begin{lemma} \label{Length $3$ paths} 
Given $S_{g,p}$, there exists a length $3$ geodesic segment $\left\{v_{0},v_{1},v_{2},v_{3}\right\}$ in $\mathcal{C}(S_{g,p})$ such that:
\begin{enumerate}
\item If $g\geq 1$, $g \neq 2$, then for any $k,j, 0\le k,j \le 3$, 
\[ i(v_{k},v_{j})=i_{|k-j|,(g,p)};\]
\item If $g=2$ and $p$ is even, $(1)$ holds. If $p$ is odd, then 
\[  i(v_{0},v_{3}) \le i_{3,(2,p)}+1, i(v_{0},v_{2})=i(v_{1},v_{3})=1.\]
\item If $g=0$ and $p$ is even,
\[ i(v_{0},v_{3})\le i_{3,(0,p)}+1, i(v_{0},v_{2})=2, i(v_{1},v_{3})=4;  \]
\item If $g=0$ and $p$ is odd, 
\[ i(v_{0},v_{3})=i_{3,(0,p)}, i(v_{0},v_{2})=2, i(v_{1},v_{3})=4.\]
\end{enumerate}
\end{lemma}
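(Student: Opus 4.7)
The plan is to take $v_0$ and $v_3$ to be the minimal filling pair produced in Lemma \ref{minimal filling}, and then to construct intermediate simple closed curves $v_1, v_2$ directly on the surface model used there. Since $v_0\cup v_3$ fills and cuts $S_{g,p}$ into disks and once-punctured disks, candidate auxiliary curves can be read off from the combinatorics of this decomposition, and their intersection numbers with $v_0$ and $v_3$ can be computed combinatorially. The upper bound $d(v_0,v_3)\le 3$ will then be witnessed by the exhibited path, while the lower bound $\ge 3$ comes for free from the filling property in Lemma \ref{minimal filling}.

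In the case $g\ge 1$, $g\ne 2$, the idea is to work inside the iterated cut-and-glue construction of Lemma \ref{minimal filling}(3): the filling pair on $S_{g,p}$ is assembled from a one-holed torus block $\tilde S_1$ carrying arcs $\tilde\alpha_1,\tilde\beta_1$, together with one-holed genus-two blocks $\tilde S_2$. Inside $\tilde S_1$ one can produce a non-separating simple closed curve disjoint from $\tilde\alpha_1$ and meeting $\tilde\beta_1$ exactly once, and symmetrically a curve with the roles of $\alpha$ and $\beta$ swapped. These extend to global simple closed curves $v_1$ and $v_2$ on $S_{g,p}$ satisfying the required one-intersection conditions with $v_3$ and $v_0$ respectively. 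Because both are supported in a single torus block, they can be taken disjoint from each other. The bigon-puncturing operation used to raise the puncture count modifies only $v_0\cup v_3$ in a small neighborhood and leaves $v_1,v_2$ undisturbed, so the intersection counts propagate to arbitrary $p$.

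The cases $g=2$ and $g=0$ follow the same template, using the explicit polygonal model for $S_{2,0}$ from the proof of Lemma \ref{minimal filling} and the explicit pair of curves on $S_{0,p}$ from the same proof. The $+1$ defects in the odd-puncture cases are forced by the parity mismatch between $i_{3,(g,p)}$ and the count produced by the double-bigon puncturing construction. On the sphere, any two essential simple closed curves meet an even number of times, which immediately gives $i(v_0,v_2)=2$; the specific $v_2$ chosen to cut $v_0$ twice then forces $i(v_1,v_3)=4$ by inspection of the picture.

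The main obstacle is the simultaneous-disjointness requirement: we must produce $v_1$ and $v_2$ that are disjoint from $v_0$ and $v_3$ respectively, realize the sharp intersection bounds $i(v_0,v_2)$ and $i(v_1,v_3)$, \emph{and} are disjoint from one another. Each of these demands in isolation is easy; coupling them is not. The strategy is to localize the choice of $v_1, v_2$ inside a single building block of the cut-and-glue construction, where the positions of all four curves $v_0,v_1,v_2,v_3$ can be drawn and all incidence data checked directly. The verification of the intersection numbers with $v_3$ at distance $3$ then reduces to a bookkeeping exercise on the building block, while the verification of the geodesic property reduces to Lemma \ref{minimal filling} and the arc-and-curve projection argument used there.
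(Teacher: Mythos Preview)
Your plan has a genuine gap in the positive-genus case. You propose to find both $v_1$ and $v_2$ as simple closed curves supported in the one-holed torus block $\tilde S_1$, with $v_1$ disjoint from $\tilde\alpha_1$ and meeting $\tilde\beta_1$ once, and $v_2$ disjoint from $\tilde\beta_1$ and meeting $\tilde\alpha_1$ once, \emph{and} $v_1$ disjoint from $v_2$. But in a one-holed torus any two non-isotopic essential simple closed curves intersect; your $v_1$ and $v_2$ have different geometric intersection with $\tilde\alpha_1$, hence are not isotopic, hence cannot be made disjoint. So the localization ``both supported in a single torus block, hence can be taken disjoint'' fails exactly at the step you flagged as the main obstacle.

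There is a second issue: the cut-and-glue construction in Lemma~\ref{minimal filling} produces a filling pair on $S_{g,0}$ with $2g$ intersections, which is one more than $i_{3,(g,0)}=2g-1$ when $g\neq 2$. So even before you look for $v_1,v_2$, your choice of $v_0,v_3$ already misses the sharp bound required in part~(1) for $p\le 1$.

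The paper avoids both problems by starting from the $(2g-1)$-intersection filling pair of \cite{AH1}, whose complement is a single $(8g-4)$-gon $P$, and arguing combinatorially on $P$: it shows (via an order-$4$ rotation argument at a vertex) that some $\alpha$-inverse pair is non-opposite in $P$, and that the larger complementary region contains a $\beta$-inverse pair. Arcs in $P$ joining these two inverse pairs project to the desired $v_1,v_2$; disjointness of $v_1$ and $v_2$ is arranged by nesting the arcs inside $P$, not by confining them to a low-complexity subsurface. The punctured, $g=2$, and $g=0$ cases then follow essentially as you outlined.
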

\begin{proof}
For $(1)$, assume first that $p=0$ and $g\neq 2$. Then by $(1)$ of Lemma $3.1$, there exists a filling pair $(\alpha,\beta)$ on $S_{g,p}$ whose complement consists of a single connected component. As in the proof of Lemma $3.1$, orient both $\alpha$ and $\beta$ and label the arcs along $\alpha$ (resp. $\beta$) $\alpha_{1},...,\alpha_{2g-1}$ (resp. $\beta_{1},...,\beta_{2g-1}$). Then cutting along $\alpha\cup \beta$ produces a single polygon $P$ with $(8g-4)$ sides, whose edges are labeled from the set 
\[A(g):= \left\{\alpha_{1}^{\pm},...,\alpha_{2g-1}^{\pm},\beta_{1}^{\pm},...,\beta_{2g-1}^{\pm}\right\}. \]
$(\alpha_{k}, \alpha_{k}^{-1})$ is referred to as an inverse pair; these edges project down to the same arc of $\alpha$ on the surface. Note that the edges of $P$ alternate between belonging to $\alpha$ and $\beta$.

Consider the map $M:A(g)\rightarrow A(g)$ which sends an edge $e$ to the inverse of the edge immediately following $e$ along $P$ in the clockwise direction. We claim that $M$ has order $4$. Indeed, the map $M$ is combinatorially an order $4$ rotation about an intersection point of $\alpha \cup \beta$, as pictured below. 

Now, suppose that every inverse pair constitutes a pair of opposite edges of $P$; that is to say, the complement of any inverse pair in the edge set of $P$ consists of two connected components with the same number of edges. Then $M$ induces a rotation of $P$ by $2\pi/(4g-1)$, which is not an order $4$ rotation, a contradiction.

\begin{figure}[H]
\centering
	\includegraphics[width=3in]{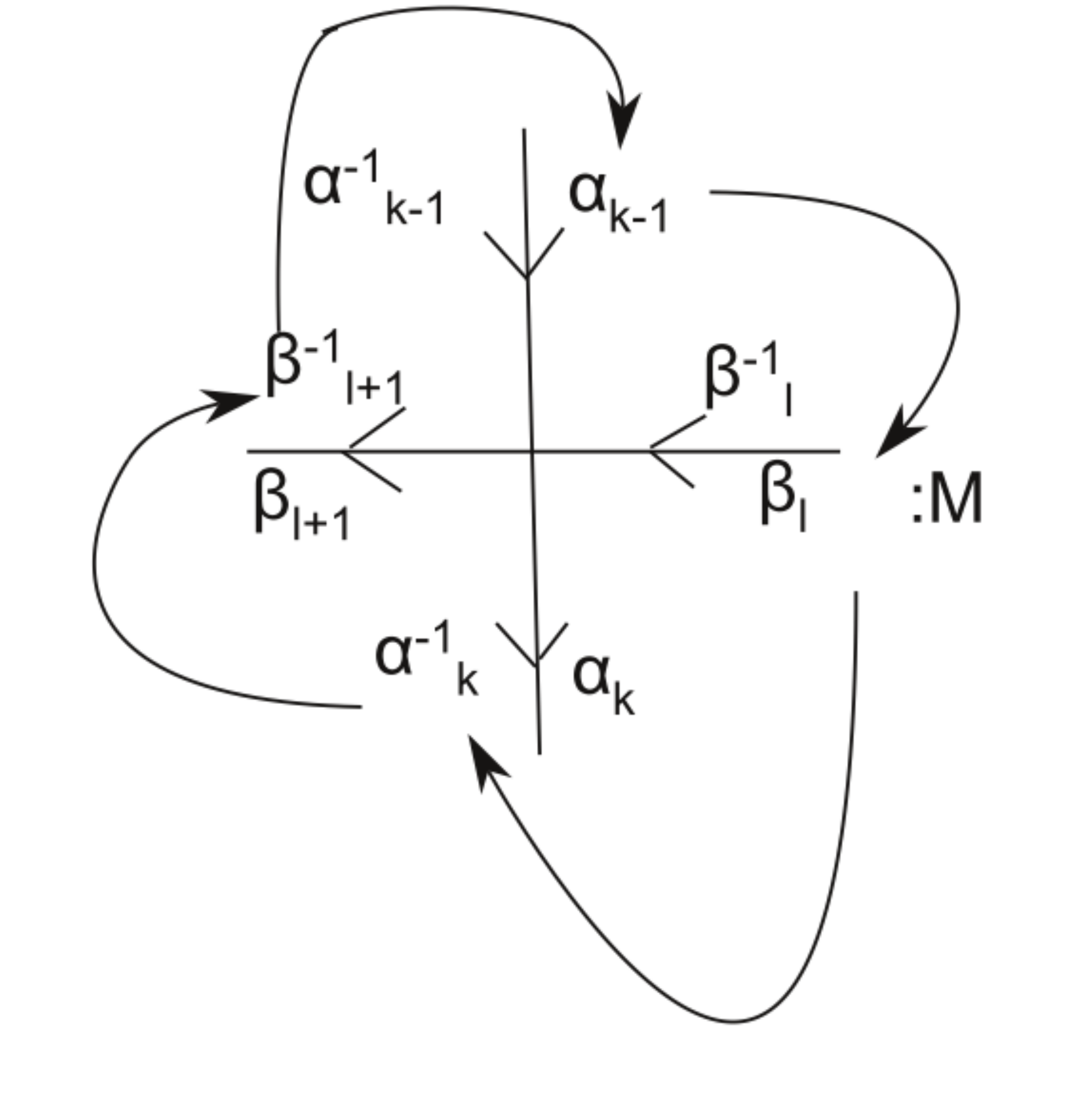}
\caption{$M$ sends the arc $\beta_{l}$ to $\alpha_{k}^{-1}$. The arrows demonstrate the order $4$ action of $M$ around the vertex.}
\end{figure} 

Therefore, there must be at least one inverse pair comprised of edges which are not opposite on $P$. Without loss of generality, this pair is of the form $(\alpha_{k},\alpha_{k}^{-1})$. Let $R$ be the connected component of the complement of $\alpha_{k} \cup \alpha_{k}^{-1}$ in the edge set of $P$ containing more than $4g-3$ edges.

Then there must exist an inverse pair of the form $(\beta_{j},\beta_{j}^{-1})$ contained in $R$, since the edges of $P$ alternate between belonging to $\alpha$ and $\beta$, and thus there must be a strictly larger number of $\beta$ edges in $R$ than in the other component. 

Then there is an arc connecting the edges $(\alpha_{k},\alpha_{k}^{-1})$ which projects down to a simple closed curve $v_{2}$ disjoint from $\beta$ and intersecting $\alpha$ exactly once. Similarly, there is an arc connecting $(\beta_{j},\beta_{j}^{-1})$ projecting down to a simple closed curve $v_{1}$ which is disjoint from both $v_{1}$ and $\alpha$, and which intersects $\beta$ exactly once. Then define $v_{0}:=\alpha, v_{3}:=\beta$; this concludes the proof of $(1)$ in the case $p=0$.

If $p>0$, then the double bigon construction introduced in the proof of Lemma $3.1$ can be used again here to obtain a geodesic segment $\left\{v_{0},...,v_{3}\right\}$ in $\mathcal{C}_{1}(S_{g,p})$ satisfying the desired property. 

If $g=2$, the existence of the desired geodesic segment in $\mathcal{C}_{1}(S_{2,0})$ will imply the existence of the corresponding segment in $\mathcal{C}_{1}(S_{2,p})$ for $p>1$ by another application of the double bigon construction. The filling pair $(\alpha,\beta)$ on $S_{2,0}$ shown on Page $41$ of \cite{FM} is obtained by gluing together a pair of octagons in accordance with the gluing pattern pictured below in Figure $4$. 

\begin{figure}[htbp]
\begin{center}
\includegraphics[width=3.5in]{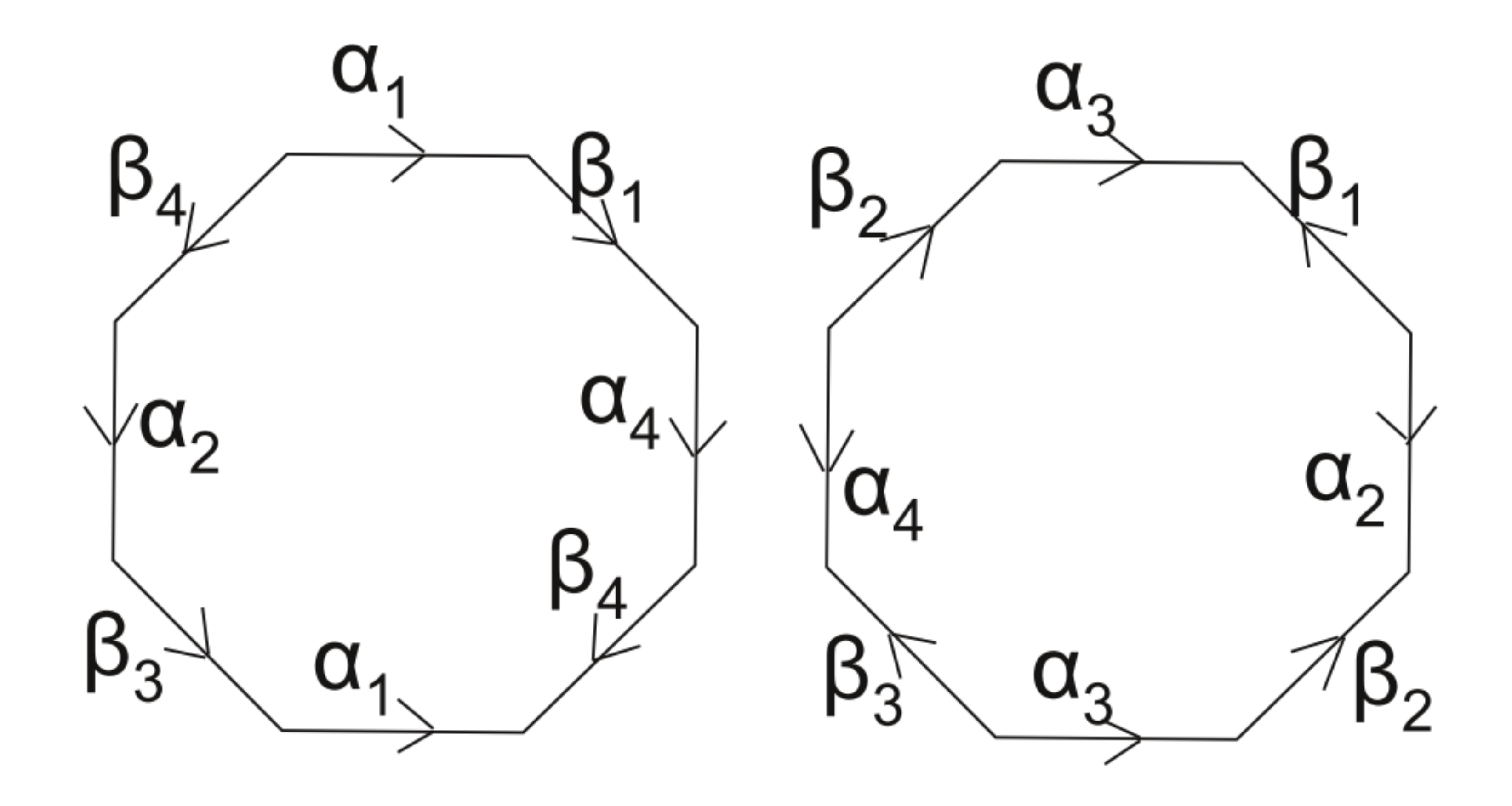}
\caption{Gluing the octagons together by pairing together sides with the same label produces $S_{2,0}$; the $\beta$-arcs concatenate in order to form a simple closed curve $\beta$, which fills $S_{2,0}$ with the simple closed curve $\alpha$- the concatenation of the $\alpha$-arcs.}
\label{default}
\end{center}
\end{figure}

Note that both $\alpha_{1}$ and $\alpha_{1}^{-1}$ are on the left octagon, and $\beta_{2},\beta_{2}^{-1}$ are both edges of the right octagon. Therefore, let $v_{3}$ be a simple closed curve whose lift to the disjoint union of octagons pictured above is an arc connecting $\alpha_{1}$ to $\alpha_{1}^{-1}$, and let $v_{2}$ be a curve whose lift is an arc connecting $\beta_{2}$ to $\beta_{2}^{-1}$. Then $\left\{\alpha=v_{0},v_{1},v_{2},v_{3}=\beta\right\}$ is the desired geodesic segment in $\mathcal{C}(S_{2,0})$. 

Finally, both $(3)$ and $(4)$ follow from the following picture on $S_{0,p}$:

\begin{figure}[h]
\centering
	\includegraphics[width=3.5in]{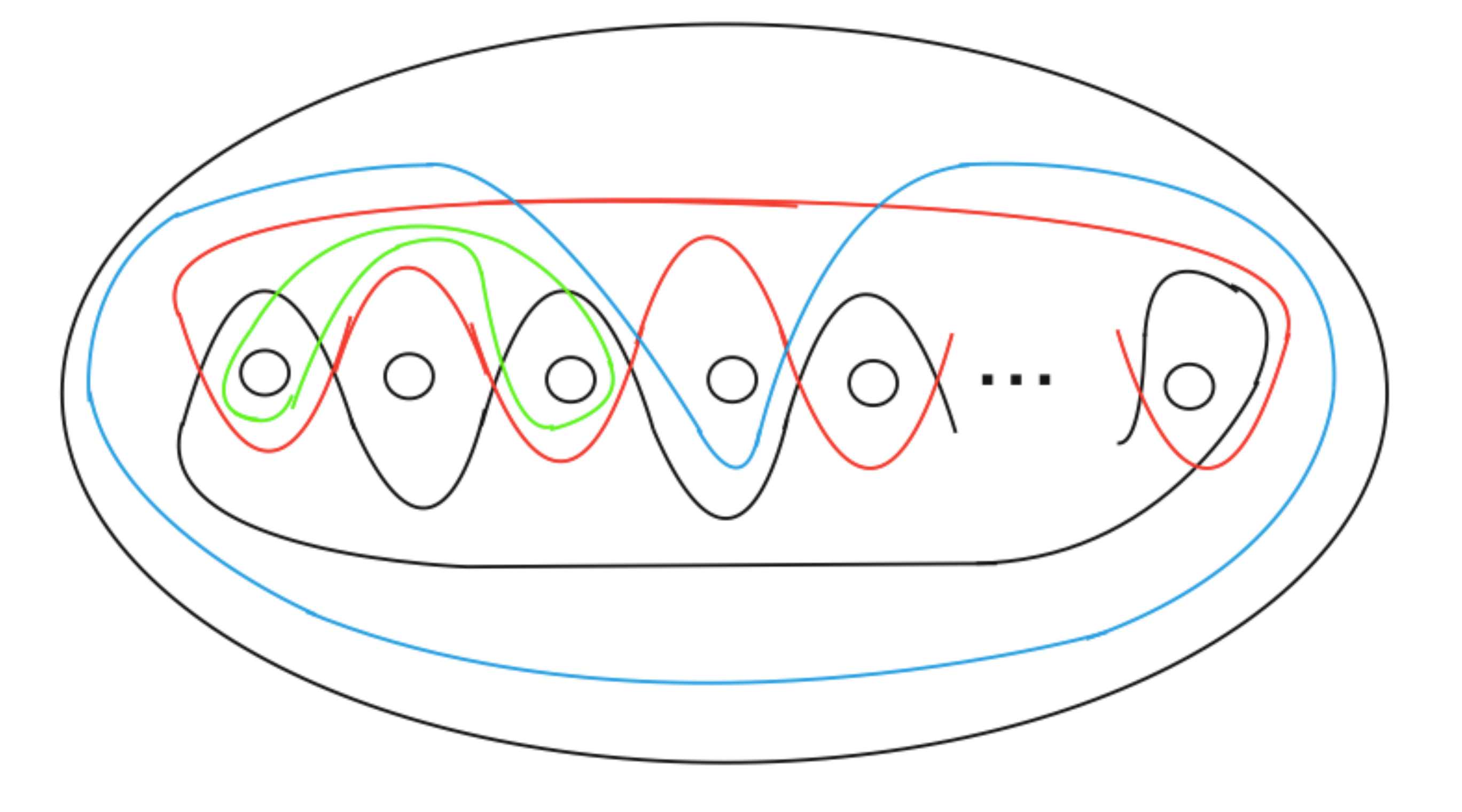}
\caption{Set the black curve equal to $v_{0}$, the blue to $v_{1}$, the green to $v_{2}$, and the red to $v_{3}$.}
\end{figure} 

\end{proof}

Since the intersection numbers determined in Lemma \ref{Length $3$ paths} are the basis for our construction in the next sections, we make the following notation: if $\{v_0,v_1,v_2,v_3\}$ is the geodesic in $\C(S_{g,p})$ determined by Lemma \ref{Length $3$ paths}, then set $\eta_{3,(g,p)} = i(v_0,v_3)$ and $\eta_{2,(g,p)} = \max \{i(v_1,v_3), i(v_0,v_2) \}$. Note that in most cases these are the minimum possible intersection numbers given their distance.

\section{Warm-up}

To give the idea of the general argument, we present a simplified proof of Theorem \ref{main} for the case where $n=4$. For small $n$, we can bypass the bounded geodesic image theorem using the simple fact that the projection from the curve complex to the curve complex of an annulus is coarsely $1$- Lipschitz.

Begin with curves $\alpha_3$ and $\beta_3$ that have distance $3$ in the curve complex and intersect $i_{3,(g,p)}$ times. Set $\beta_4$ equal to $T_{\alpha_3}^{10}(\beta_3)$ and $\alpha_4$ equal to $\beta_3$. Note that $d(\alpha_4, \beta_4) \le 6$ since each of these curves has distance $3$ from $\alpha_3$. If there is a geodesic from $\alpha_4$ to $\beta_4$ all of whose vertices intersect $\alpha_3$ then since the projection to $\C(\alpha_3)$ is Lipschitz
$$d_{\alpha_3} (\alpha_4, \beta_4)\le 6 + 1 = 7.  $$

This, however, contradicts our choice of Dehn twist $T_{\alpha_3}^{10}$ since
$$d_{\alpha_3}(\alpha_4,\beta_4) = d_{\alpha_3}(\beta_3, T_{\alpha_3}^{10}(\beta_3)) \ge 10 - 2 =8. $$

We conclude that any geodesic from $\alpha_4$ to $\beta_4$ must enter the one neighborhood of $\alpha_3$ and so $d(\alpha_4, \beta_4) \ge 4$. Finally, by the twist inequality
$$i(\alpha_4, \beta_4) = i(\beta_3, T_{\alpha_3}^{10}(\beta_3)) = 10 \cdot i(\alpha_3, \beta_3)^2 = 10 \cdot (i_{3,(g,p)})^2,$$
as required.

This process can be repeated, however at each step we require a twist whose power grows linearly with curve complex distance. To avoid this, we use the bounded geodesic image theorem.

\section{Minimal intersection rays}
Set $B = M+3$, where $M$ is as in Theorem \ref{BGIT}. Fix a surface $S = S_{g,p}$ and begin with the length $3$ geodesic $[v_0,v_1,v_2,v_3]$ in $\C(S_{g,p})$ with $i(v_i,v_j) = \eta_{|j-i|,(g,p)}$ as in Lemma \ref{Length $3$ paths} and the final paragraph of Section \ref{MIFC}. Set $\eta = \eta_{3,(g,p)}$. What's important here is the fact that $i(v_0,v_3)$ is bounded linearly in the complexity of $S$, while $i(v_0,v_2)$ and  $i(v_1,v_3)$ are uniformly bounded, independent of complexity. From this, we construct a geodesic ray whose vertices have optimal intersection number given their distance, in the sense described in the introduction. We began by defining a sequence of geodesics $\gamma_k$ in $\C(S)$ whose lengths grow exponentially in $k$ and have the property that all but the last vertex of $\gamma_k$ is contained in $\gamma_{k+1}$. We refer to $k$ as the \emph{level} of $\gamma_k$.

Set $\gamma_0 = [v_0,v_1,v_2,v_3]$. To construct $\gamma_{k+1}$ from $\gamma_k$, let $e_k$ be the terminal vertex of $\gamma_k$ that is not $v_0$ and set $\gamma'_k$ to be $\gamma_k$ minus the vertex $e_k$. Then define
$$\gamma_{k+1} = \gamma'_k \cup  T^B_{e_k}(\gamma'_k),$$
where $B = M+3$. We think of $\gamma_{k+1}$ as a edge path from $v_0$ to $T_{e_k}^B(v_0)$ so that our recursive definition makes sense. An simple argument shows that the length of $\gamma_k$ is $\ell(\gamma_k) = 2^k+2$ and that $\gamma'_k$ is an initial subgeodesic of $\gamma_{k+1}$.

\begin{lemma}
For $k \ge 0$, $\gamma_k$ is a geodesic in $\C(S)$.
\end{lemma}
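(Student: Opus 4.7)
The plan is straightforward induction on $k$. The base case $k=0$ is Lemma \ref{Length $3$ paths}. I would carry along the auxiliary length claim $\ell(\gamma_k) = 2^k+2$, which follows from the recursion $\ell(\gamma_{k+1}) = 2(\ell(\gamma_k)-1)$: the penultimate vertex $w_k$ of $\gamma_k$ is adjacent to $e_k$ in $\C(S)$, hence disjoint from $e_k$ as a curve, hence fixed by $T^B_{e_k}$, so $\gamma'_k$ and $T^B_{e_k}(\gamma'_k)$ share exactly the vertex $w_k$ and no other.

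For the inductive step, assume $\gamma_k$ is a geodesic of length $2^k+2$, so both $v_0$ and $T^B_{e_k}(v_0)$ lie at $\C(S)$-distance $2^k+2$ from $e_k$ (using that $T^B_{e_k}$ acts on $\C(S)$ by isometries fixing $e_k$). Take any geodesic $\sigma$ from $v_0$ to $T^B_{e_k}(v_0)$ and split into two cases. If some vertex $u$ of $\sigma$ lies in $N_1(e_k)$, then two applications of the triangle inequality give $\ell(\sigma) \ge 2(2^k+1) = 2^{k+1}+2$, and we are done. Otherwise every vertex of $\sigma$ meets $e_k$ essentially, and Theorem \ref{BGIT} applied to the annulus with core $e_k$ yields $d_{e_k}(v_0, T^B_{e_k}(v_0)) \le \mathrm{diam}(\pi_{e_k}(\sigma)) \le M$.

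The main obstacle, and precisely where the choice $B = M+3$ enters, is ruling out this second case: the twist inequality recalled in Section 2 gives $d_{e_k}(v_0, T^B_{e_k}(v_0)) \ge B-2 = M+1$, contradicting the bound from BGIT. Hence the first case always occurs, $d(v_0, T^B_{e_k}(v_0)) \ge 2^{k+1}+2$, and since $\gamma_{k+1}$ is an explicit edge path of that length between these vertices it must be a geodesic (which also retroactively ensures it has no repeated vertices). Conceptually this is a clean upgrade of the $n=4$ warm-up of the previous section, trading the ad hoc coarse Lipschitz argument for the uniform bound provided by the bounded geodesic image theorem, which is what permits a single choice of twist power $B$ to work at every level of the induction.
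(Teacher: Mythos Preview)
Your proof is correct and follows essentially the same approach as the paper's: induction on $k$, the lower bound $d_{e_k}(v_0,T^B_{e_k}(v_0))\ge B-2>M$ forcing any geodesic to enter $N_1(e_k)$ via the bounded geodesic image theorem, and then the triangle inequality giving the required length bound. One terminological quibble: what you invoke as ``the twist inequality'' is actually the annular-distance estimate $d_\alpha(\gamma,T_\alpha^N\gamma)\ge N-2$ stated just above it in Section~2; in the paper the phrase \emph{twist inequality} refers to the intersection-number estimate at the end of that section.
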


\begin{proof}
For $k=0$ this is by construction. Assume that the lemma holds for $\gamma_k$ and recall that $\gamma_{k+1} = \gamma'_k \cup  T^B_{e_k}(\gamma'_k)$ has length $2^{k+1}+2$. Note that 
$$d_{e_k}(v_0, T^B_{e_k}(v_0) \ge B -2 > M, $$
so by the bounded geodesic image theorem any geodesic between these vertices must pass through a $1$-neighborhood of $e_k$. Hence, $d(v_0, T^B_{e_k}(v_0)) \ge 2(2^k +2) -2 = 2^{k+1}+2 = \ell(\gamma_{k+1})$. Hence, $\gamma_{k+1}$ is a geodesic.
\end{proof}

The following theorem is our main technical result. It gives the desired intersection number, by \emph{level}. The corollary following it removes the dependence on level.

\begin{theorem}
For $k\ge0$ and $\gamma_k = [v_0, \ldots, v_{2^k+2}]$ the following inequality holds for all $0 \le i,j\le 2^k +2$:
$$i(v_i,v_j) \le \epsilon (\epsilon B)^{2^k -1}\eta^{|j-i|-2} + O(\eta^{|j-i|-4}), $$
where $\epsilon =1$ if $g \ge 1$ and $\epsilon = 4$ otherwise.
\end{theorem}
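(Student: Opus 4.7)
The plan is to induct on $k$. The base case $k = 0$ is handled directly by Lemma \ref{Length $3$ paths} together with the definition $\eta = \eta_{3,(g,p)} = i(v_0, v_3)$: the target bound specializes to $\epsilon \eta^{|j-i|-2} + O(\eta^{|j-i|-4})$ for $0 \le i, j \le 3$, which is compatible with each of the five cases of Lemma \ref{Length $3$ paths} (adjacent vertices contribute nothing, distance $2$ intersections are bounded by $\epsilon$, and $i(v_0, v_3) = \eta$ by definition).

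For the inductive step, assume the bound holds at level $k$ and recall that $\gamma_{k+1} = \gamma'_k \cup T^B_{e_k}(\gamma'_k)$, where $e_k = v_{2^k+2}^{(k)}$. The first $2^k + 2$ vertices of $\gamma_{k+1}$ agree with $\gamma'_k$, while the remaining vertices are the image of $\gamma'_k$ under $T_{e_k}^B$, joined at the common vertex $v_{2^k+1}^{(k)}$ (which is disjoint from $e_k$, hence fixed by $T_{e_k}^B$). Given $0 \le i < j \le 2^{k+1} + 2$, I would split into three cases based on which halves contain $v_i^{(k+1)}$ and $v_j^{(k+1)}$. When both vertices lie in the same half, the inductive hypothesis applied to $\gamma_k$ immediately yields the bound, using that $T_{e_k}^B$ preserves intersection numbers and the trivial inequality $(\epsilon B)^{2^k - 1} \le (\epsilon B)^{2^{k+1} - 1}$.

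The main case is when $v_i^{(k+1)} = v_i^{(k)}$ lies in the first half and $v_j^{(k+1)} = T^B_{e_k}(v_{j'}^{(k)})$ lies in the second half, with $j' = 2^{k+1} + 2 - j$. The twist inequality gives
$$i(v_i^{(k+1)}, v_j^{(k+1)}) \le B \cdot i(e_k, v_i^{(k)}) \cdot i(e_k, v_{j'}^{(k)}) + i(v_i^{(k)}, v_{j'}^{(k)}).$$
Applying the inductive hypothesis to each factor in the product yields leading terms $\epsilon(\epsilon B)^{2^k - 1}\eta^{2^k - i}$ and $\epsilon(\epsilon B)^{2^k - 1}\eta^{2^k - j'}$, so that the combinatorial identity
$$B \cdot \epsilon^2 \cdot (\epsilon B)^{2(2^k - 1)} = \epsilon (\epsilon B)^{2^{k+1} - 1}$$
combined with $(2^k - i) + (2^k - j') = |j - i| - 2$ produces precisely the target leading coefficient and power of $\eta$.

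The main obstacle is tracking the error terms and the boundary case $i = 2^k + 1$. When $i \le 2^k$ a short check shows $|i - j'| \le |j - i| - 2$, so both the last summand $i(v_i^{(k)}, v_{j'}^{(k)})$ and all the mixed leading-times-error products arising from applying the inductive hypothesis contribute only $O(\eta^{|j-i|-4})$. When $i = 2^k + 1$ the factor $i(e_k, v_i^{(k)})$ vanishes and the twist inequality degenerates to $i(v_i^{(k+1)}, v_j^{(k+1)}) \le i(v_i^{(k)}, v_{j'}^{(k)})$; one then verifies $|i - j'| = |j - i|$ directly from the formulas, and the inductive hypothesis at level $k$ yields a bound with leading constant $\epsilon(\epsilon B)^{2^k - 1}$, well under the target $\epsilon(\epsilon B)^{2^{k+1} - 1}$.
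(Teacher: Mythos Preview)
Your argument is correct and follows the same inductive scheme as the paper: establish the base case via Lemma~\ref{Length $3$ paths}, reduce the same-half case to the induction hypothesis by invariance of intersection number under $T^B_{e_k}$, and handle the cross-half case with the twist inequality. Your bookkeeping is in fact slightly cleaner than the paper's---you correctly record the leading constant as $\epsilon(\epsilon B)^{2^{k+1}-1}$ via the identity $B\cdot\epsilon^2\cdot(\epsilon B)^{2(2^k-1)}=\epsilon(\epsilon B)^{2^{k+1}-1}$, and you make explicit the check $|i-j'|\le |j-i|-2$ that absorbs both the additive term $i(v_i^{(k)},v_{j'}^{(k)})$ and the mixed error products into $O(\eta^{|j-i|-4})$. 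One small remark: your ``short check'' for $|i-j'|\le |j-i|-2$ tacitly uses $j'\le 2^k$ as well as $i\le 2^k$; this is automatic once one observes that the boundary value $j=2^k+1$ (equivalently $j'=2^k+1$) places $v_j^{(k+1)}$ in the first half and is therefore already covered by the same-half case, just as you handled $i=2^k+1$.
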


\begin {proof}
The proof is by induction on $k$. For $k=0$, this holds by our choice of $[v_0,v_1,v_2,v_3]$ using Lemma \ref{Length $3$ paths}. Suppose the result holds for $\gamma_k$ and to simplify notation set $n = 2^k+2$ relabel the vertices of $\gamma_k$ so that $\gamma_k = [v_0, \ldots, v_{n-1}, v_{n}].$ With this notation
$$\gamma_{k+1} = [v_0, \ldots, v_{n-1}] \cup [T^B_{v_n}(v_{n-1}), \ldots, T^B_{v_n}(v_0)]$$
where $v_{n-1} = T^B_{v_n}(v_{n-1})$.

By the induction hypotheses, it suffices to bound intersections of the form
$$i(T^B_{v_n}(v_j),w),$$
where $v_j \in \gamma_k$ and $w \in \gamma_{k+1}$. If $w = T^B_{v_n}(w')$ for some $w' \in \gamma_k$, then we observe that $i(T^B_{v_n}(v_j),w) = i(v_j,w')$ and apply the induction hypothesis at level $k$. Therefore, we may assume that $w= v_i \in \gamma_k$ and $i \le n-2$.

Using the twist inequality, we apply the induction hypotheses to $\gamma_k$ and compute 
\begin{eqnarray*}
i(T^B_{v_n}(v_j),v_i) &\le& B i(v_n,v_j) i(v_n,v_i) + i(v_j,v_i) \\
&\le& B (\epsilon (\epsilon B)^{2^k-1}\eta^{|n-j|-2} +O(\eta^{|n-j|-4}) ) \cdot \\
 && (\epsilon (\epsilon B)^{2^k-1}\eta^{|n-i|-2} +O(\eta^{|n-i|-4})) +\\
 && (\epsilon (\epsilon B)^{2^k-1}\eta^{|j-i|-2} +O(\eta^{|j-i|-4}))\\
 &\le& \epsilon (\epsilon B)^{2^{k+1}+2} \eta^{(2n-j-i-4)}+ O(\eta^{(2n-i-j-6)}).
\end{eqnarray*}
Here we have used our assumption that $i,j \le n-2$ to conclude that $\eta^{{(2n-i-j-6)}}$ dominates $\eta^{(i-j-2)}$. Since $\gamma_{k+1}$ is a geodesic, the distance from $T^B_{v_n}(v_j)$ to $v_i$ is $2n-j-i-2$. If we denote this distance by $d$, we have shown
$$i(T^B_{v_n}(v_j),v_i) \le \epsilon (\epsilon B)^{2^{k+1}+2}\eta^{d-2} + O(\eta^{d-4}).$$
This completes the proof.

\end{proof}

Now set $\gamma = \cup_k \gamma'_k$. This is an infinite geodesic ray with endpoint $v_0$. For convenience, relabel the vertices of $\gamma$ so that $\gamma = [v_0,v_1,v_2, \ldots]$.

\begin{corollary}
Let $\gamma$ be the geodesic ray in $\C(S_{g,p})$ as described above. Then for any $i \le j$
$$i(v_i,v_j) \le \epsilon (\epsilon B)^{2j-5} \eta^{|j-i|-2} + O(\eta^{|j-i|-4}),$$
where $\epsilon =1$ if $g \ge 1$ and $\epsilon = 4$ otherwise.
\end{corollary}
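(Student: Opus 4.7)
The plan is to reduce the corollary to the previous theorem by choosing the level $k$ as small as possible for each pair of indices $(i,j)$. Since $\gamma$ is defined as the nested union $\bigcup_k \gamma'_k$, where $\gamma'_k$ is an initial subgeodesic of $\gamma_k$ of length $2^k+1$ (that is, vertices $v_0,v_1,\ldots,v_{2^k+1}$), the relabeling in the corollary is consistent with the labeling used in the theorem: the vertex $v_j$ of $\gamma$ agrees with the vertex $v_j$ of $\gamma'_k$ whenever $j\le 2^k+1$, and moreover $\gamma'_k\subset \gamma_k$.

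Given $i\le j$ with $j\ge 3$, I would then choose the minimal $k\ge 1$ such that $j\le 2^k+1$. Minimality forces $2^{k-1}+1<j$, which rearranges to
\[
2^k - 1 \le 2j - 5.
\]
For such $k$, both $v_i$ and $v_j$ lie in $\gamma_k$, so the previous theorem applies directly and yields
\[
i(v_i,v_j) \le \epsilon(\epsilon B)^{2^k - 1}\,\eta^{|j-i|-2} + O\bigl(\eta^{|j-i|-4}\bigr).
\]
Since $\epsilon B\ge 1$, replacing $2^k-1$ by the upper bound $2j-5$ only weakens the inequality, producing exactly the claim.

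The small cases $j\in\{0,1,2\}$ must be checked separately, but here the intersection numbers $i(v_i,v_j)$ are uniformly bounded (via $\eta_{2,(g,p)}$ and $\eta_{3,(g,p)}$ from Lemma \ref{Length $3$ paths}), and these uniform bounds can be absorbed into the implicit constants of the error term. The argument therefore contains no real obstacle: the only substantive point is the bookkeeping that connects the index $j$ in the infinite ray to the level $k$ in the doubling construction, i.e.\ verifying the exponent inequality $2^k - 1 \le 2j-5$ that converts the per-level bound into a per-index bound. Everything else is immediate from the theorem and the nested structure of the $\gamma'_k$.
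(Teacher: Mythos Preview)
Your proof is correct and follows essentially the same route as the paper: choose the smallest level whose geodesic contains $v_j$, verify the exponent inequality $2^{\text{level}}-1\le 2j-5$, and invoke the theorem; your index $k$ is simply the paper's $k+1$. The paper's proof does not explicitly treat the cases $j\le 2$ either, so your brief remark about them is already more than what appears there.
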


\begin{proof}
Take $k$ so that $2^k+2 \le j < 2^{k+1} +2.$ Then $v_j$ is a vertex of $\gamma$ that first appears at the $k+1$th level. That is, $v_j \in \gamma_{k+1}$. Then $2^{k+1}-1 = (2^{k+1} + 4) -5 \le 2j-5$. Now apply the main theorem to $\gamma_{k+1}$ conclude the proof.
\end{proof}

\bibliographystyle{amsalpha}
%

\bibliography{SmallIntersection.bbl}

\providecommand{\bysame}{\leavevmode\hbox to3em{\hrulefill}\thinspace}
\providecommand{\MR}{\relax\ifhmode\unskip\space\fi MR }
\providecommand{\MRhref}[2]{%
  \href{http://www.ams.org/mathscinet-getitem?mr=#1}{#2}
}
\providecommand{\href}[2]{#2}
\begin{thebibliography}{Web13}

\bibitem[AH]{AH1}
Tarik Aougab and Shinnyih Huang, \emph{Counting minimally intersecting filling
  pairs on closed orientable surfaces}, in preparation.

\bibitem[Aou12]{Aougab1}
Tarik Aougab, \emph{Uniform hyperbolicity of the graphs of curves}, arXiv
  preprint arXiv:1212.3160 (2012).

\bibitem[FM12]{FM}
Benson Farb and Dan Margalit, \emph{A primer on mapping class groups},
  Princeton Mathematical Series, vol.~49, Princeton University Press,
  Princeton, NJ, 2012.

\bibitem[Hem01]{Hempel}
John Hempel, \emph{3-manifolds as viewed from the curve complex}, Topology
  \textbf{40} (2001), no.~3, 631--657.

\bibitem[Iva92]{Iv2}
Nikolai~V. Ivanov, \emph{Subgroups of {T}eichm\"uller modular groups},
  Translations of Mathematical Monographs, vol. 115, American Mathematical
  Society, Providence, RI, 1992, Translated from the Russian by E. J. F.
  Primrose and revised by the author.

\bibitem[Lic62]{Lickorish}
WB~Raymond Lickorish, \emph{A representation of orientable combinatorial
  3-manifolds}, The Annals of Mathematics \textbf{76} (1962), no.~3, 531--540.

\bibitem[MM00]{MM2}
Howard~A. Masur and Yair~N. Minsky, \emph{Geometry of the complex of curves.
  {II}. {H}ierarchical structure}, Geom. Funct. Anal. \textbf{10} (2000),
  no.~4, 902--974.

\bibitem[Web13]{webb2013short}
Richard~CH Webb, \emph{A short proof of the bounded geodesic image theorem},
  arXiv preprint arXiv:1301.6187 (2013).

\end{thebibliography}

\end{document}